\newtheorem{thm}{Theorem}
\newtheorem{lem}[thm]{Lemma}
\newtheorem{cor}[thm]{Corollary}
\newtheorem{rem}{Remark}
\newtheorem{ex}{Example}
\newcommand{\leg}[2]{\left(\frac{#1}{#2}\right)}
\begin{document}
	
	\title{On ternary quadratic forms over the rational numbers}
	
	\author{Amir Jafari$^*$\\Farhood Rostamkhani}
	\date{}
	\address{Department of Mathematical Sciences, Sharif University of Technology, Tehran, Iran.}
	\email{amirjafa@gmail.com, farhood.rostamkhani@gmail.com}
	\maketitle
	
	\begin{abstract}
		In this note, we give an elementary proof of the following classical fact. Any positive definite ternary quadratic form over the rational numbers fails to represent infinitely many positive integers. For any ternary quadratic form (positive definite or indefinite), our method constructs certain congruence classes whose elements, up to a square factor, are the only elements not represented over the rational numbers by that form. In the case of a positive definite ternary form, we show that these classes are non-empty.
		This shows that the minimum number of variables in a positive definite quadratic form representing all positive integers is four. Our proof is very elementary and only uses quadratic reciprocity of Gauss. 
		
	\end{abstract}
	{\small \textbf{Keywords:} Ternary Quadratic Forms,  Gauss Reciprocity, Hasse-Minkowski's Theorem } \\
\indent {\small \textbf{AMS subject classification: 11A15, 11D09}}
	\maketitle
	
	\section{Introduction}
	It is a well-known theorem of Lagrange that any positive integer is a sum of at most four squares. In other words, the quadratic form $x^2+y^2+z^2+t^2$ represents any positive integer over the integers. However, the quadratic form $x^2+y^2+z^2$ can not represent any number of the form $8k+7$ even when we allow $x,y$, and $z$ to be rational numbers. One may wonder if $ax^2+by^2+cz^2$ for some carefully chosen positive coefficients, $a$, $b$, and $c$ will represent all positive integers that are large enough. Indeed, if we allow having negative coefficients, then a ternary quadratic form such as $x^2+y^2-z^2$ can represent any number since for an integer $n$, we have
	$$(n+1)^2+0^2-n^2=2n+1,$$
	$$n^2+1^2-(n-1)^2=2n.$$
	However, it is a classical result that no ternary form $ax^2+by^2+cz^2$ represents all large enough integers for positive coefficients. In this note, we prove the following theorem that implies this classical result in a very precise way.
	\begin{thm}\label{main}
		Let $a$, $b$, and $c$ be square-free integers that are pairwise relatively prime. The following statements hold for the ternary quadratic form $ax^2+by^2+cz^2$.
		
		\begin{enumerate}
			\item If $p$ is an odd prime factor of $a$ (resp. $b$ or $c$) and $\leg{-bc}{p}=-1$ (resp. $\leg{-ac}{p}=-1$ or $\leg{-ab}{p}=-1$) and $n$ is an integer such that $\leg{an/p}{p}=-1$ (resp. $\leg{bn/p}{p}=-1$ or $\leg{cn/p}{p}=-1$) then $np$ is not represented over the rationals. 
			
			\item If for all odd prime factors $p$ of $a$ (resp. $b$ and $c$), one has $\leg{-bc}{p}=1$ (resp. $\leg{-ac}{p}=1$ and $\leg{-ab}{p}=1$) and if $a$, $b$ and $c$ are {\emph{positive}}, then either $a\equiv b\equiv c\mod 4$ or one of $a$, $b$ or $c$ (say $a$) is even and $b+c\equiv a$ or $2a\mod 8$.
			
			\item If $a\equiv b\equiv c\mod 4$ then numbers congruent to $-abc\mod 8$ are not represented over the rationals and if one of $a$, $b$ or $c$ (say $a$) is even and $b+c\equiv a$ or $2a\mod 8$ then numbers congruent to $-abc\mod 16$ are not represented over the rationals.
		\end{enumerate}
	\end{thm}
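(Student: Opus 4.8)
The plan is to treat all three parts through local solvability. Any rational representation of an integer $m$ by $ax^2+by^2+cz^2$ is in particular a solution in every completion $\mathbb{Q}_p$, so to prove that $m$ is \emph{not} represented over $\mathbb{Q}$ it suffices to exhibit one prime $p$ at which $ax^2+by^2+cz^2=m$ has no $p$-adic solution. Parts (1) and (3) each produce such an obstruction --- at an odd prime dividing a coefficient, and at $p=2$, respectively --- whereas part (2) is a statement purely about $a,b,c$ and will be extracted directly from quadratic reciprocity. Throughout I use that $a,b,c$ are squarefree and pairwise coprime, so that at each odd prime $p$ at most one coefficient is divisible by $p$, and that a nonzero square has even $p$-adic valuation.

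For part (1), suppose $p\mid a$ is odd, write $a=pa'$ with $p\nmid a'$, and note that the hypothesis $\leg{an/p}{p}=\leg{a'n}{p}=-1$ forces $p\nmid n$, so $v_p(np)=1$. Assuming a $p$-adic solution, write $x=p^{\alpha}u$, $y=p^{\beta}v$, $z=p^{\gamma}w$ with $u,v,w$ units; then $ax^2$ has odd valuation $1+2\alpha$ while $by^2,cz^2$ have even valuations $2\beta,2\gamma$. Matching $v_p$ of the sum to $1$ leaves exactly two possibilities. Either $ax^2$ strictly dominates, forcing $\alpha=0$, and dividing by $p$ and reducing modulo $p$ gives $a'u^2\equiv n$, hence $\leg{a'n}{p}=1$; or the two even-valuation terms tie ($\beta=\gamma$) and their leading parts cancel, i.e. $bv^2+cw^2\equiv 0\pmod p$, hence $\leg{-bc}{p}=1$. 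Each contradicts a hypothesis, and the statements for $b$ and $c$ follow by relabeling.

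For part (2), the hypotheses assert that each relevant Legendre symbol equals $+1$ at every odd prime dividing the corresponding coefficient; multiplying over these primes turns them into Jacobi symbols, so $\leg{-bc}{a}\leg{-ac}{b}\leg{-ab}{c}=1$ when $a,b,c$ are odd (and the analogous product with the even coefficient stripped of its factor $2$, giving factors $\leg{2}{b}\leg{2}{c}$, otherwise). Expanding each symbol and pairing the mixed terms $\leg{a}{b}\leg{b}{a}$, etc., I will apply Jacobi reciprocity --- this is exactly where positivity of $a,b,c$ enters --- so the product collapses to $(-1)^{E}=1$ for an exponent $E$ that is an explicit polynomial over $\mathbb{F}_2$ in $x=\tfrac{a-1}{2}$, $y=\tfrac{b-1}{2}$, $z=\tfrac{c-1}{2}$ and, when a coefficient is even, in the mod-$8$ data recorded by the supplementary law. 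In the all-odd case $E=x+y+z+xy+yz+zx$, and $E\equiv 0$ holds precisely when $x=y=z$, that is $a\equiv b\equiv c\pmod 4$. When (say) $a$ is even, the factors $\leg{2}{b}\leg{2}{c}$ bring in the residues of $b,c$ modulo $8$, and using the bijection $b\bmod 8\leftrightarrow\bigl(\tfrac{b-1}{2},\tfrac{b^2-1}{8}\bigr)\bmod 2$ I will verify that $E\equiv 0$ is equivalent to $b+c\equiv a$ or $2a\pmod 8$, the claimed alternative.

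For part (3), I work $2$-adically: clearing denominators yields integers $x,y,z,w$, not all even, with $ax^2+by^2+cz^2=mw^2$, and it suffices to contradict this modulo a suitable power of $2$. Since squares lie in $\{0,1,4\}\pmod 8$, the congruence $a\equiv b\equiv c\pmod 4$ constrains the left side modulo $8$, and comparison with $m\equiv -abc\pmod 8$ leaves no admissible choice of parities for $x,y,z,w$; when $a$ is even and $b+c\equiv a$ or $2a\pmod 8$, the same analysis one power further --- modulo $16$ --- excludes $m\equiv -abc\pmod{16}$. I expect the main obstacle to be precisely this $2$-adic bookkeeping: keeping the interaction of the supplementary law $\leg{2}{\cdot}$ with the residues modulo $8$ consistent between the reciprocity computation of part (2) and the direct congruence count of part (3), so that the congruence classes produced in (2) coincide exactly with those shown non-representable in (3).
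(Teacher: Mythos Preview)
Your proposal is correct and follows essentially the same route as the paper. Part (1) is identical in substance: the paper clears denominators to $ax^2+by^2+cz^2=npt^2$ with $x,y,z,t$ integers not all divisible by $p$ and argues by congruences, which is your valuation argument in different clothing (your dichotomy ``$ax^2$ dominates'' versus ``$\beta=\gamma$ and leading parts cancel'' needs one more line to dispose of the case $\beta=\gamma$ without cancellation, but that reduces immediately to the first branch). Part (2) is exactly the paper's computation: multiply the three Jacobi-symbol identities, apply reciprocity and the supplementary laws, and read off the parity condition on $\tfrac{a-1}{2},\tfrac{b-1}{2},\tfrac{c-1}{2}$ (and the mod-$8$ data when a coefficient is even). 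Part (3) in the paper is precisely the mod-$8$/mod-$16$ case check you outline, carried out in a separate lemma with explicit case-by-case analysis of parities of $x,y,z,t$; your sketch correctly anticipates that this $2$-adic bookkeeping is where the work lies.
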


	Here, $\leg{\cdot}{p}$ is used for the Legendre symbol.
	
	We will show, in section 4, that this theorem implies that similar conclusions are valid for any positive definite ternary quadratic form over the rational numbers without any extra assumptions. Also, we will show, with the aid of Hasse-Minkowski's theorem, that the numbers listed in part 1 and part 3 of the theorem above (up to a square factor) are the only numbers not represented by $ax^2+by^2+cz^2$ over the rationals. Classifying the numbers represented by a ternary quadratic form over the integers is a much more difficult problem that has not been done in general.
	
	As a corollary to the above theorem, we have the following result that was proved by Doyle and Williams \cite{DW} for the particular case when $x$, $y$, and $z$ are integers and with a larger modulus, but with no restrictive assumptions on the coefficients. These assumptions can be removed, as is done in section 4 of this note.
	
	\begin{cor}
		If $a$, $b$ and $c$ are square free, pairwise relatively prime positive integers then if $a$ and $b$ and $c$ are all odd (resp. one of $a$, $b$ or $c$ is even) then any number congruent to $-abc \mod 8(abc)^2$ (resp. $\mod (2abc)^2$) is not represented by $ax^2+by^2+cz^2$ over the rational numbers.
	\end{cor}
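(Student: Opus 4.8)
The plan is to derive the corollary from Theorem \ref{main} by a dichotomy on its hypotheses, with no further input needed; in particular Hasse--Minkowski plays no role here since we only assert non-representation. Fix squarefree, pairwise coprime positive integers $a,b,c$ and let $N$ be congruent to $-abc$ modulo $8(abc)^2$ (the all-odd case) or modulo $(2abc)^2$ (the case where one coefficient is even). Everything turns on whether the hypothesis of part (2) holds, i.e.\ whether $\leg{-bc}{p}=1$ for every odd prime $p\mid a$, and likewise for $b$ and $c$.

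First I would suppose this fails, so there is an odd prime $p$ dividing one of the coefficients --- say $p\mid a$ --- with $\leg{-bc}{p}=-1$. I would then apply part (1) with the factorization $N=np$. The key point is that the modulus pins down the $p$-adic behavior of $N$: writing $a=pa'$ with $p\nmid a'bc$, we have $v_p(-abc)=1$ while $p^2$ divides both $8(abc)^2$ and $(2abc)^2$, so $v_p(N)=1$ and $N=np$ with $n\equiv -a'bc \pmod p$. Then $an/p=a'n\equiv -(a')^2bc\pmod p$, whence $\leg{an/p}{p}=\leg{-bc}{p}=-1$, and part (1) gives that $N=np$ is not represented. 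Crucially this congruence on $n$ holds for every $N$ in the class, so the conclusion is uniform over the whole congruence class.

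If instead the hypothesis of part (2) holds, I would feed the positivity of $a,b,c$ into part (2) to obtain structural information at the prime $2$: either $a\equiv b\equiv c \pmod 4$, or (with $a$ even, say) $b+c\equiv a$ or $2a \pmod 8$. A short parity check decides which branch we are in: if $a,b,c$ are all odd the second branch is impossible, so $a\equiv b\equiv c\pmod 4$; if $a$ is even then $a\not\equiv b\pmod 4$ forces the second branch. In the all-odd case part (3) says numbers congruent to $-abc \pmod 8$ are not represented, and since $8 \mid 8(abc)^2$ we are done. In the even case part (3) excludes numbers congruent to $-abc\pmod{16}$; here $a$ squarefree and even gives $4\mid a^2\mid (abc)^2$, so $16\mid (2abc)^2$ and again $N$ lies in the excluded class.

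The only real obstacle is the $p$-adic bookkeeping in the first case: one must verify that the modulus $8(abc)^2$ (resp.\ $(2abc)^2$) is exactly large enough to fix both $v_p(N)=1$ and the residue of $n=N/p$ modulo $p$, so that the Legendre symbol appearing in part (1) is determined by $\leg{-bc}{p}$ alone. Everything else reduces to checking that the stated moduli contain the moduli $8$ and $16$ from part (3), which is immediate from the parity of the coefficients.
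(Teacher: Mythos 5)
Your proposal is correct and takes essentially the same route as the paper's own proof: the same dichotomy on whether the hypothesis of part (2) holds, the same application of part (1) in the first case via $a=pa'$, $n\equiv -a'bc \pmod p$ and $\leg{a'n}{p}=\leg{-bc}{p}=-1$, and the same fallback to parts (2) and (3) with the observation that $8$ divides $8(abc)^2$ and $16$ divides $(2abc)^2$ when a coefficient is even. The paper is merely terser about the $p$-adic bookkeeping (that the modulus forces $v_p(N)=1$ and fixes $N/p$ modulo $p$), which you spell out explicitly.
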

	\begin{proof} Let $p$ be an odd prime factor of $a$ such that $\leg{-bc}{p}=-1$ and $a=pa'$. Then, any number congruent to $-abc \mod p^2$ is of the form $np$ where $n$ is congruent to $-a'bc\mod p$ and hence $\leg{a'n}{p}=\leg{-bc}{p}=-1$ and by part 1 of the theorem above $np$ is not represented over the rationals. By symmetry one can replace $a$ with $b$ or $c$. So if we are in the part 2 case of the theorem, then thanks to part 3 of the theorem, we know that $-abc\mod 8$ or $-abc\mod 16$ are not represented over the rationals in the case when $a,b,c$ are all odd or when one of them is even. Note that when one of $a$, $b$ or $c$ is even then $(2abc)^2\equiv 0\mod 16$. This proves the corollary.
	\end{proof}

	The history of the problem studied in this article is very rich. It was Fermat who, in 1638, first stated without proof that any number is a sum of at most four squares. After some unsuccessful tries by Euler, Lagrange was able to prove it completely in 1770. Another conjecture of Fermat that any number is a sum of at most three triangular numbers (i.e., numbers of the form $n(n+1)/2$) was proved by Gauss in 1796 which he proved by showing that the only numbers not represented by $x^2+y^2+z^2$ over the integers are of the form $4^m(8k+7)$. Gauss published this result in his famous Disquisitiones Arithmeticae in 1801. In 1748, Euler conjectured that every odd integer is represented by the ternary quadratic form $x^2+y^2+2z^2$ over the integers. This conjecture was proved by Lebesgue in 1857 \cite{Le}. Dickson \cite{D1} in 1927 showed that the forms $x^2+2y^2+3z^2$ and $x^2+2y^2+4z^2$ also represent all odd integers over the integers. In 1995 Kaplansky \cite{K2} gave a list of 23 integral positive definite ternary quadratic forms that he claimed to be the only such forms (up to equivalence) that represent all odd integers over the integers. He showed the validity of his claim for 19 of these forms. Interestingly, the three forms found by Euler and Dickson were the only diagonal ternary quadratic forms in this list. Another interesting historical result is due to Ramanujan. In 1916, he wrote a paper \cite{R}, in which, among other things, he studied the ternary form $x^2+y^2+10z^2$. He showed that the only even numbers not represented by this form over the integers are of the form $4^m(16k+6)$, and observed the following list of odd numbers are not represented over the integers $$3, 7, 21, 31, 33, 43, 67, 79, 87, 133, 217, 219, 223, 253, 307, 391, \dots$$
	He mentioned that they do not seem to follow any simple law. It is not clear if Ramanujan believed this list was complete or not or if the set was finite or infinite. In 1927 Jones and Pall \cite{JP} showed that $679$ can also be added to this list. In 1941 Gupta \cite{G} found another number in this list, namely $2719$. Computer searches for numbers up to $2\times 10^{10}$ have not produced any new odd numbers. It was proved in 1990 by Duke and Schulze-Pillot in \cite{DS} that $x^2+y^2+10z^2$ represents all odd integers large enough, and hence this list is finite. Also, in 1997 Ono and Soundararajan \cite{OS} conjectured that Ramanujan's list with the two extra numbers $679$ and $2719$ is complete. They showed the validity of their conjecture, assuming the generalized Riemann hypothesis. 
	
	The study of numbers represented by ternary quadratic forms over the integers is a very active research theme. For example, besides odd numbers, one may be tempted to study other arithmetical progressions $dk+r$ for $k\in\{0,1,\dots\}$. A form representing all numbers in this progression over the integers is called $(d,r)$-universal. Sun in \cite{S1} and \cite{S2} proved that the forms $x^2+3y^2+24z^2$, $4x^2+3y^2+6z^2$ and $x^2+12y^2+6z^2$ are $(6,1)$-universal. In \cite{WS}, Wu and Sun were able to show that $2x^2+3y^2+10z^2$ is $(8,5)$-universal. Also, they showed that $x^2+3y^2+14z^2$ and $2x^2+3y^2+7z^2$ are $(14,7)$-universal.
	
	According to \cite{DW}, it is not known who was the first person to state that integral positive definite ternary quadratic forms can not represent all positive integers. It is stated in Albert \cite{A} in 1933 and also on page 142 of Conway's beautiful book \cite{Con}, where he gives a modern proof using $p$-adic equivalence of forms and isotropic forms.   Finally we mention two papers by Mordell \cite{M}, \cite{M2} of 1931 and 1932 that study the solvability of the equation $ax^2+by^2+cz^2+dt^2=0$ when $x,y,z$ and $t$ are integers not all equal to zero. As this is related to the representability of an integer by $ax^2+by^2+cz^2$ over the rationals, some of his results overlap ours.
	
	In this note, we give an elementary proof that uses only the quadratic reciprocity law of Gauss. Our methods are similar to the methods used in \cite{DW}. However, our proof is shorter and easier to follow and produces a more decisive result, i.e., non-representability over the rationals instead of the integers. Also, our list of numbers not represented over the rational numbers is complete.
	\\
	{\textbf{Acknowledgment.}} The authors wish to express their acknowledgment to a referee for providing very constructive remarks that improved the quality of this paper considerably.
	
	\section{Proof}
	
	In this section, we present a proof of Theorem \ref{main}. Throughout, the coefficients, $a$, $b$, and $c$ are non-zero, square-free, and pairwise relatively prime.
	\\
	\\
	{\textbf{Proof of the first part of Theorem 1.}}
	Assume $p|a$ and 
	in contrary, assume that for integers $x,y, z$ and $t$, we have 
	$$ax^2+by^2+cz^2=npt^2.$$
	We may assume that $p$ does not divide at least one of $x, y, z$ or $t$. If $y$ or $z$ are not divisible by $p$, then by taking the above equation modulo $p$, it follows that $-bc$ is a quadratic residue modulo $p$, contrary to the assumption of the lemma. So $y$ and $z$ are divisible by $p$. Simplifying the equation, it follows that 
	$$(a/p)x^2\equiv nt^2\pmod p.$$
	Since at least one of $x$ or $t$ is not divisible by $p$, $na/p$ is a quadratic residue modulo $p$, again contrary to the assumption of the lemma. Hence, $ax^2+by^2+cz^2$ does not represent $np$ over the rational numbers.
	\\
	\\
	{\textbf{Proof of the second part of Theorem 1.}}
	In this part $a$, $b$, and $c$ are assumed to be positive. The proof uses the quadratic reciprocity for the Jacobi symbol
	$$\leg{m}{n}=\prod_i \leg{m}{p_i}^{e_i}$$
	where $n=p_1^{e_1}\dots p_k^{e_k}$ is an odd number. We use the following well-known facts, see \cite{F}.
	$$\leg{-1}{n}=(-1)^{(n-1)/2},\quad \leg{2}{n}=(-1)^{(n^2-1)/8}\quad \mbox{$n$ is odd}$$
	$$\leg{n}{m}\leg{m}{n}=(-1)^{\frac{n-1}{2}\frac{m-1}{2}}\quad \mbox{$m$ and $n$ are odd and $(m,n)=1$}$$
	$$\leg{mm'}{n}=\leg{m}{n}\leg{m'}{n}\quad\mbox{$n$ is odd}$$
	The assumption implies when $a,b$ and $c$ are odd that $\leg{-bc}{a}=\leg{-ac}{b}=\leg{-ab}{c}=1$ and hence if we write these as 
	$$\leg{-1}{a}\leg{b}{a}\leg{c}{a}=1$$
	$$\leg{-1}{b}\leg{a}{b}\leg{c}{b}=1$$
	$$ \leg{-1}{c}\leg{a}{c}\leg{b}{c}=1$$ 
	and take the product of the three expressions and use Gauss reciprocity, we get
	$$(-1)^{\alpha+\beta+\gamma+\alpha\beta+\beta\gamma+\alpha\gamma}=1$$
	where $\alpha =(a-1)/2$, $\beta=(b-1)/2$ and $\gamma=(c-1)/2$.
	This implies that $\alpha,\beta$, and $\gamma$ have the same parity since if, for example, $\alpha$ is even and $\beta$ is odd, the exponent is congruent to $1$ modulo $2$. The first case of the lemma is proved.
	
	Now assume that $a=2a'$, where "a'" is an odd number. Then
	$$\leg{-1}{a'}\leg{b}{a'}\leg{c}{a'}=1$$
	$$\leg{-1}{b}\leg{2}{b}\leg{a'}{b}\leg{c}{b}=1$$
	$$ \leg{-1}{c}\leg{2}{c}\leg{a'}{c}\leg{b}{c}=1$$

	So if we let $\alpha=(a'-1)/2$, $\beta=(b-1)/2$, $\gamma=(c-1)/2$, $\beta'=(b^2-1)/8$ and $\gamma'=(c^2-1)/8$, it follows by multiplying this expressions and using the properties of the Jacobi symbol that 
	$$(-1)^{\alpha\beta+\alpha\gamma+\beta\gamma+\alpha+\beta+\gamma+\beta'+\gamma'}=1$$
	Equivalently
	$$8\alpha\beta+8\beta\gamma+8\beta\gamma+8\alpha+8\beta+8\beta'+8\gamma'=(a'+b+c)^2-(a')^2-8$$
	must be divisible by 16.
	This implies that $(b+c)(b+c+2a')$ is divisible by $8$ and not by $16$. Since $b$ and $c$ are odd hence $b+c\equiv 0, a,2a$ or $3a\pmod 8$. Since $(b+c)(b+c+2a')$ is divisible by $16$ when $b+c\equiv 0$ or $3a\pmod 8$ the second part of the lemma will be proved.
	\\
	\\
	{\textbf{Proof of the third part of Theorem 1.} The following is a stronger result.
		\begin{lem}
			The following statements hold for the congruence classes modulo $8$ or $16$ represented by $ax^2+by^2+cz^2$.
			\begin{enumerate}
				\item If $a$, $b$ and $c$ are odd integers then $ax^2+by^2+cz^2$ modulo $8$ represents all congruence classes except when $a\equiv b\equiv c \mod 4$ where the only congruence class that it does not represent is $-abc\mod 8$. And in this case, any number congruent to $-abc\mod 8$ is not represented over the rationals.
				\item If one of $a$, $b$ or $c$ (say $a$) is even then $ax^2+by^2+cz^2$ modulo $2^n$ represents all congruence classes except when $b+c\equiv a$ or $2a\mod 8$ where the only congruence class that it does not represent is $-abc\mod 16$. And in this case any number congruent to $-abc\mod 16$ is not represented over the rationals. 
			\end{enumerate}
		\end{lem}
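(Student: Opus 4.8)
The plan is to separate the statement into two independent tasks: a finite computation describing the image of the form modulo $8$ (in the first case) and modulo $16$ (in the second), and a short $2$-adic descent that upgrades non-representation modulo a power of $2$ to non-representation over $\mathbb{Q}$.

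For the representation statements I would start from the fact that $x^2\equiv 0,1,4\pmod 8$ and $x^2\equiv 0,1,4,9\pmod{16}$. In the odd case each term $ax^2$ contributes a value in $\{0,a,4\}\pmod 8$, so the set of represented classes is just the set of sums of one choice from each of $\{0,a,4\}$, $\{0,b,4\}$, $\{0,c,4\}$ modulo $8$. Splitting according to how many of $x,y,z$ are odd, one checks that every even class is always obtained (the two-odd-variable sums $a+b,a+c,b+c$ together with the all-even sums $\{0,4\}$ already exhaust $\{0,2,4,6\}$, using that among $a,b,c$ two are congruent modulo $4$), and that the odd classes obtained are $\{a,a+4,b,b+4,c,c+4,a+b+c\}\pmod 8$. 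These cover all odd residues unless $a\equiv b\equiv c\pmod 4$, in which case the single-variable sums cover only one of the pairs $\{1,5\}$, $\{3,7\}$ and the triple sum $a+b+c$ adds exactly one more class. The identity $a+b+c+abc\equiv 4\pmod 8$, valid whenever $a\equiv b\equiv c\pmod 4$, then shows that the one class left uncovered is precisely $-abc\pmod 8$.

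The even case is handled the same way but modulo $16$, writing $a=2a'$ and using $2a'x^2\in\{0,2a',8\}\pmod{16}$ together with $by^2\in\{0,b,4b,9b\}$ and $cz^2\in\{0,c,4c,9c\}$. This is a larger but still purely finite enumeration; the outcome is that all classes are represented unless $b+c\equiv a$ or $2a\pmod 8$, and that the unique class then missed is $-abc\pmod{16}$. To pass from ``represented modulo $8$ or $16$'' to ``represented modulo $2^n$ for all $n$'' I would invoke the usual $2$-adic lifting: at a solution in which some variable attached to an odd coefficient is odd, the corresponding partial derivative $2\,(\text{coeff})\,(\text{variable})$ has $2$-adic valuation exactly one, so Hensel's lemma lifts a solution modulo $8$ to a genuine solution over $\mathbb{Z}_2$.

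Finally, for the non-representation over $\mathbb{Q}$, suppose $n\equiv -abc$ (modulo $8$, resp.\ $16$) were represented; clearing denominators and dividing out the common factor gives a primitive integer solution of $ax^2+by^2+cz^2=nt^2$. If $t$ is odd then $nt^2\equiv n\equiv -abc$ modulo $8$ (resp.\ $16$, where one uses that $n$ is even so odd squares do not matter), contradicting the computation above; hence $t$ is even. Reducing the equation modulo $4$ (odd case) or modulo $8$ (even case) and using that $a\equiv b\equiv c\pmod 4$, resp.\ that $b+c\equiv a,2a\pmod 8$, then forces $x,y,z$ all even as well, contradicting primitivity. The main obstacle is the even case of the finite computation: the enumeration modulo $16$ has many subcases, and the delicate point is to verify that the \emph{unique} missing class is exactly $-abc\pmod{16}$ and that it is missing under precisely the stated congruence condition on $b+c$; the descent itself, by contrast, closes in a single step once these congruence facts are in hand.
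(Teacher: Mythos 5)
Your overall strategy coincides with the paper's: a finite enumeration of the classes hit by the form modulo $8$ (odd case) and modulo $16$ (even case), followed by a descent on a primitive solution of $ax^2+by^2+cz^2=nt^2$. Your odd case is complete and correct, and in fact slightly slicker than the paper's: where the paper normalizes the form to $a(x^2+y^2+z^2)$ or $a(x^2+y^2+5z^2)$ modulo $8$, you identify the missing class directly via the identity $a+b+c+abc\equiv 4\pmod 8$ (valid when $a\equiv b\equiv c\pmod 4$, and easily checked since both sides are invariant under shifting any coefficient by $4$). Your descent argument is also essentially the paper's, including the correct observation that for even $n$ one has $9n\equiv n\pmod{16}$, so an odd $t$ still gives $nt^2\equiv n\pmod{16}$.

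However, there is a genuine gap, and you have named it yourself: the even case of the finite computation is asserted, not proved. Saying ``the outcome is that all classes are represented unless $b+c\equiv a$ or $2a\pmod 8$, and the unique class then missed is $-abc\pmod{16}$'' is precisely the content of part 2 of the lemma, and it is where the paper spends most of its effort. Concretely, three things are missing. First, a proof that all \emph{odd} classes modulo $16$ are always represented; the paper does this with the observation $9b\equiv b+8\pmod{16}$, so replacing $y$ by $3y$ shifts a represented odd class by $8$. Second, a case analysis on $b+c\pmod 8$ (four cases, one with subcases) showing that when $b+c\equiv 0$ or $3a\pmod 8$ every even class is hit, while when $b+c\equiv a$ or $2a\pmod 8$ exactly one even class among $3a,5a,7a$ is missed; this completeness direction (no exception outside the stated condition) is part of the statement and cannot be skipped. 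Third, the identification of that missed class with $-abc\pmod{16}$ is not automatic: it requires showing, e.g., that $bc\equiv 3\pmod 8$ when $b+c\equiv 2a\pmod 8$, and $bc\equiv 1$ or $5\pmod 8$ in the two subcases of $b+c\equiv a\pmod 8$, which is exactly the kind of congruence bookkeeping your modulo-$8$ identity handled in the odd case but which you have no analogue of here. Until this enumeration is actually carried out, part 2 of the lemma remains unproven.
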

		\begin{proof} 	If $a, b$ and $c$ are odd and $a\not\equiv b\mod 4$, then $ax^2+by^2\mod 4=\{0,1,3\}$. Also since $4c\equiv 4\mod 8$, if $ax^2+by^2\mod 8$ represents $n$, then $ax^2+by^2+cz^2\mod 8$ will represent $n+4$ as well, hence we find that $ax^2+by^2+cz^2\mod8$ contains $\{0,1,3,4,5,7\}$. Finally since $c$ is congruent to $a$ or $b$ (say $a$) $\mod 4$ hence $2a$ and $2a+4$ which are $2$ and $6\mod 8$ will be represented as well. 
			So, in the first case, except when $a\equiv b\equiv c \mod4$, all congruence classes modulo $8$ are represented. In this exceptional case, either they are all congruent modulo $8$, and we have $a(x^2+y^2+z^2)$ modulo $8$. Since $x^2+y^2+z^2$ modulo $8$ can be any congruence class except $-1$; hence we get all congruence classes except $-a$, which is the same as $-abc\mod 8$. This last fact is because $-abc\equiv-aa^2\equiv-a\mod 8$. If the coefficients are not congruent modulo $8$, we may assume without loss of generality that our form modulo $8$ is $a(x^2+y^2+5z^2)$. Since $x^2+y^2+5z^2$ modulo $8$ can be any congruence class except $-5$; hence we get all congruence classes except $-5a$, which is $-abc\mod 8$. To show that numbers $n$ congruent to $-abc \mod 8$ will not be represented over the rationals when $a\equiv b\equiv c \mod 4$, assume that $ax^2+by^2+cz^2=nt^2$ where $x,y,z,t$ have no common factor. If $t$ is odd then $ax^2+by^2+cz^2\equiv n\mod 8$ which is a contradiction. If $t$ is even, then two of $x$, $y$ or $z$ must be odd, and the third one must be even, say $x$, and $y$ are odd, and $z$ is even, then $a+b\equiv 0\mod 4$, which is also a contradiction.
			\\
			\\
			If $a=2a'$ with $a'$ being odd then $ax^2+by^2\mod4$ will represent all congruence classes. Since $4c\equiv 4\mod 8$, as before $ax^2+by^2+cz^2\mod8$ will represent all classes. Observe that $9b\equiv b+8\mod 16$ and $9c\equiv c+8\mod 16$. Hence if $ax^2+by^2+cz^2\mod 16$ represents $n$ and one of $y$ or $z$ (say $y$) is odd then by replacing $y$ with $3y$, $n+8$ will be represented as well. Since all integers modulo $8$ are represented, this implies that all odd integers modulo $16$ are represented.  Note that since $a'$ and $3a'$ are represented modulo $16$, hence by replacing $x,y$ and $z$ with $2x,2y$ and $2z$, we can represent $4a'=2a$ and $12a'=6a$ modulo $16$. It is also clear that $a$ and $4a$ are represented modulo $16$. So the only even numbers that remain modulo $16$ are $3a,5a,7a$. Observe that the only even integers modulo $16$ represented by $by^2+cz^2$ are $0,4b,4c, 4(b+c),b+c$ and $b+c+8\equiv b+c+4a\mod 16$.
			\\
			{\textbf{Case 1:}} If $b+c\equiv 0\mod 8$, then $4b$ and $4c$ will give $2a$ and $6a\mod 16$ and either $b+c$ or $b+c+4a$ give $4a\mod 16$. Adding $a$ from $ax^2$ term, we can represent $3a,5a,7a$ modulo $16$ as demanded.
			\\
			{\textbf{Case 2:}} If $b+c\equiv 3a\mod 8$, then we can represent $b+c$ and $b+c+4a$ these are $3a$ and $7a\mod 16$. Also, we can represent $4(b+c)\equiv 4a\mod 16$ and adding $a$, will represent $5a$ as well.
			\\
			{\textbf{Case 3:}} If $b+c\equiv 2a\mod 8$, the numbers of the form $4k+2$ that will be represented modulo $16$ are $a+b+c, a+b+c+4a, a+4b,a+4c$ and $a+4(b+c)$ which are $3a, 7a, a$ and hence $5a$ will not be represented. 
			To show that $5a\equiv -abc\mod 16$, it is enough to show that $bc\equiv 3\mod 8$. But since $b$ and $c$ are odd and $b+c$ is of the form $8k+4$ and hence either one of them must be $\equiv 1\mod 8$ and the other one $\equiv 3\mod 8$ or one of them is $\equiv -1\mod 8$ and the other one $\equiv -3\mod 8$, so $bc\equiv 3\mod 8$ as demanded.
			\\
			{\textbf{Case 4:}} If $b+c\equiv a\mod 8$, then we can represent $a+4b+4c\equiv 5a\mod 16$. We have now two cases, if $b\equiv c\equiv a'\mod4$ then we can represent $a+4a'=3a\mod 16$. The congruence classes of the form $4k+2\mod 16$, represented by the form are $b+c, b+c+4a, 4b+a,4c+a, 4a+b+c,4a+b+c+4a$ and none of them is $7a$. To show that $7a\equiv -abc\mod 16$, it is enough to show $bc\equiv 1\mod 8$. We must have either $b\equiv c\equiv a'\mod 8$ or $b\equiv c\equiv 5a'\mod 8$ and in both cases we have $bc\equiv 1\mod 8$.
			\\
			Similarly if $b\equiv c\equiv 3a'\mod 4$, then we can represent $a+4b\equiv 7a\mod 16$. The congruence classes of the form $4k+2\mod 16$, represented by the form are $b+c, b+c+4a, 4b+a,4c+a, 4a+b+c,4a+b+c+4a$ and none of them is $3a$.
			To show that $3a\equiv -abc\mod 16$, it is enough to show that $bc\equiv 5\mod 8$. But one of the two coefficients $b$ or $c$ must be congruent to $3a'$ and the other one congruent to $7a'$ modulo $8$ and hence $bc\equiv 5\mod 8$.
			\\
			\\
			Finally in the second case, if $b+c\equiv a$ or $2a\mod 8$, we need to show that any number $n\equiv -abc\mod 16$ is not represented over the rational numbers. Assume in the contrary that there exists $x,y,z$ and $t$, not all even and $t\ne 0$ such that 
			$ax^2+by^2+cz^2=nt^2$. If $t$ is even then since $n$ is even, $ax^2+by^2+cz^2\equiv 0\mod 8$. Since parity of $y$ and $z$ are the same, we are in the following cases.
			\\
			If $x, y$ and $z$ are odd then 
			$$ax^2+by^2+cz^2\equiv a+b+c\equiv 2a\quad\mbox{or}\quad 3a \not\equiv 0\mod 8$$
			If $x$ is even and $y$ and $z$ are odd then 
			$$ax^2+by^2+cz^2\equiv b+c\equiv a\quad\mbox{or}\quad 2a\not\equiv 0\mod 8$$
			If $x$ is odd and $y$ and $z$ are even then
			$$ax^2+by^2+cz^2\equiv a\not\equiv 0\mod 4$$
			Hence it follows that $x,y$, and $z$ must be even, which is a contradiction. This implies that $t$ is odd and hence 
			$$ax^2+by^2+cz^2=nt^2\equiv n\mod 16$$
			which we showed can not happen. This proves the claim.
		\end{proof}
		\section{An application of Hasse-Minkowski's Theorem}
		This section uses the so-called Hasse-Minkowski's theorem to determine exactly what numbers are represented by $ax^2+by^2+cz^2$ over the rational numbers. As mentioned in the introduction, a similar problem of determining all the numbers represented by that form over the integers is much more difficult and has not been done in general. 
		\\
		Hasse-Minkowski's theorem, in this case, asserts to be able to represent $n$ by $ax^2+by^2+cz^2$ over the rational numbers, we need to be able to represent it over the real numbers and modulo any prime power.
		\\
		We keep our assumption on the coefficients $a$, $b$, and $c$ as before. Namely, they are non-zero, square-free, and pairwise relatively prime. No assumption on their signs is made.
		\\
		To achieve our goal, we give several lemmas that are going to be used later.
		\begin{lem}
			If $p$ is an odd prime number that does not divide $ab$, then any integers $n$ prime to $p$ is represented by $ax^2+by^2+cz^2\mod p^m$ for any $m\ge 1$. If $p$ does not divide $c$, then all numbers are represented modulo $p^m$.
		\end{lem}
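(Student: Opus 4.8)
The plan is to reduce everything to solvability of the form modulo $p$ together with a lifting argument. Since $p$ is odd and hence $2$ is invertible modulo $p$, the partial derivatives of $F(x,y,z)=ax^2+by^2+cz^2-n$ are $2ax$, $2by$, and $2cz$; such a derivative is a unit modulo $p$ exactly when the corresponding coefficient is prime to $p$ and the corresponding variable is nonzero modulo $p$. By Hensel's lemma, if I can produce a single triple $(x_0,y_0,z_0)$ with $F(x_0,y_0,z_0)\equiv 0\pmod p$ at which at least one partial derivative is a unit modulo $p$, then this solution lifts successively to a solution modulo $p^m$ for every $m\ge 1$ (indeed to a solution over $\mathbb{Z}_p$). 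Thus the whole lemma reduces to exhibiting, for the relevant $n$, a \emph{nonsingular} solution modulo $p$.

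The elementary engine is the fact that a nondegenerate binary form is universal over $\mathbb{F}_p$: if $p\nmid ab$ and $p$ is odd, then $ax^2+by^2$ represents every residue modulo $p$. I would prove this by the standard pigeonhole count: the sets $\{ax^2:x\in\mathbb{F}_p\}$ and $\{m-by^2:y\in\mathbb{F}_p\}$ each have $(p+1)/2$ elements, so their sizes sum to $p+1>p$ and they must intersect, giving a representation of $m$. For the first statement, where $p\nmid ab$ and $p\nmid n$, I apply this with $m=n$ and $z_0=0$: a representation $ax_0^2+by_0^2\equiv n\pmod p$ must have $(x_0,y_0)\not\equiv(0,0)$ because $n\not\equiv 0$, so one of $2ax_0$ or $2by_0$ is a unit, and Hensel's lemma finishes the lift.

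For the second statement assume in addition $p\nmid c$. If $p\nmid n$, the previous paragraph already applies, so the only new case is $p\mid n$, that is $n\equiv 0\pmod p$. Here I would use universality again, now to represent $-c$: choose $(x_0,y_0)$ with $ax_0^2+by_0^2\equiv -c\pmod p$, so that $(x_0,y_0,1)$ satisfies $ax_0^2+by_0^2+c\equiv 0\equiv n\pmod p$. Since $p\nmid c$, the partial derivative $2c$ in the $z$-variable is a unit modulo $p$ at this point, so the solution is nonsingular and lifts by Hensel to a solution modulo $p^m$ of $F\equiv n$. (This is exactly the observation that a nondegenerate ternary form over $\mathbb{F}_p$ is isotropic, reduced to the binary statement by fixing $z_0=1$.)

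The only point requiring care is the nonsingularity bookkeeping: every lift must start from a mod-$p$ solution at which some $2ax_0$, $2by_0$, or $2cz_0$ is invertible. In the first statement this is automatic from $n\not\equiv 0$, while in the delicate case $p\mid n$ of the second statement it is arranged by forcing $z_0=1$ and using $p\nmid c$; this is precisely where the extra hypothesis $p\nmid c$ is consumed and cannot be dropped.
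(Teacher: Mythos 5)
Your proposal is correct and is essentially the paper's own argument: the identical pigeonhole count (the sets $\{ax^2\}$ and $\{n-by^2\}$ each have $(p+1)/2$ elements, so they intersect) produces the mod-$p$ solution, and your Hensel lift is the same step that the paper simply carries out as an explicit induction ($x'=x+k_1p^m$ with $k_1\equiv -(2ax)^{-1}k \pmod p$) rather than citing Hensel's lemma. The case $p\mid n$ also coincides: the paper represents $n-c$ (which is prime to $p$) by the binary form and sets $z=1$, which is exactly your nonsingular point $(x_0,y_0,1)$ exploiting $p\nmid c$.
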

		\begin{proof} We prove this by induction on $m=1$. Since the classes of $ax^2$ and $n-by^2$ modulo $p$ is each of size $(p+1)/2$ so they intersect and hence one can find $x$ and $y$ not both divisible by $p$ (since $n$ is prime to $p$) such that $ax^2+by^2\equiv n\mod p$. Suppose that we have $x$ and $y$ not both multiples of $p$, say $x$ is not a multiple of $p$, such that $ax^2+by^2=n+kp^m$ for $m\ge 2$ and some integer $k$. Let $x'=x+k_1p^m$. Then
			$$a(x')^2+by^2=n+kp^m+2axk_1p^m+ak_1^2p^{2m}$$
			since $2m\ge m+1$, we need to take $k_1=-(2ax)^{-1}k\mod p$ for the induction to work. So we showed even $ax^2+by^2\mod p^m$ represents all prime to $p$ integers. If $p$ does not divide $c$, an $n$ is a multiple of $p$ then $n-c$ is prime to $p$ and is therefore represented by $ax^2+by^2\mod p^m$. hence if we let $z=1$ we have $n\equiv ax^2+by^2+c\mod p^m$. 
		\end{proof}
		
		\begin{lem} If $p$ is an odd prime number that divides $c$ then for any integer $n$ prime to $p$, $np$ is represented by $ax^2+by^2+cz^2\mod p^m$ for any $m\ge 1$, unless $\leg{-ab}{p}=\leg{nc/p}{p}=-1$ where it is not represented $\mod p^2$.
		\end{lem}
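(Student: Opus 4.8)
The plan is to prove both directions with essentially the same elementary tool used in the proof of the previous lemma, namely that for an odd prime $p$ a $p$-adic unit is a square modulo $p^m$ for every $m$ as soon as it is a square modulo $p$ (this is exactly the Hensel-type lifting carried out there). Write $c=pc'$ with $p\nmid c'$, so that $c/p=c'$ and $cz^2=pc'z^2$; note also $p\nmid ab$ since $a,b,c$ are pairwise coprime and $p\mid c$. Thus $-ab$ and $nc'$ are units modulo $p$ and their Legendre symbols are $\pm1$.

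First I would dispose of the obstruction, i.e.\ show that if $\leg{-ab}{p}=\leg{nc'}{p}=-1$ then $np$ is not represented $\bmod\ p^2$. Suppose $ax^2+by^2+pc'z^2\equiv np\pmod{p^2}$. Reading this modulo $p$ gives $ax^2+by^2\equiv0\pmod p$. If $(x,y)\not\equiv(0,0)\pmod p$ this forces $-ab$ to be a quadratic residue, contradicting $\leg{-ab}{p}=-1$; hence $p\mid x$ and $p\mid y$, so $ax^2+by^2\equiv0\pmod{p^2}$ and the congruence collapses to $c'z^2\equiv n\pmod p$. As $p\nmid n$ this makes $nc'\equiv(c'z)^2$ a quadratic residue, contradicting $\leg{nc'}{p}=-1$. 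This is the same computation as in the proof of the first part of Theorem~\ref{main}.

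For the positive direction I would split on which Legendre symbol equals $+1$. If $\leg{nc'}{p}=1$, then $n(c')^{-1}$ is a unit that is a square modulo $p$, so by the lifting fact there is a $z$ with $c'z^2\equiv n\pmod{p^{m-1}}$, whence $cz^2=pc'z^2\equiv np\pmod{p^m}$; taking $x=y=0$ represents $np$. If instead $\leg{-ab}{p}=1$, choose $u$ with $au^2+b\equiv0\pmod p$ and $p\nmid u$ (an isotropic vector of $ax^2+by^2$), and fix $y=1$. Then $a^{-1}(np-b)\equiv u^2\pmod p$ is a unit and a quadratic residue, so again the lifting fact yields $x$ with $ax^2\equiv np-b\pmod{p^m}$, i.e.\ $ax^2+by^2\equiv np\pmod{p^m}$ with $z=0$.

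The point that needs the most care is the lifting in the first positive case: one cannot simply apply Hensel's lemma to the full form $F=ax^2+by^2+cz^2$ at the solution $(0,0,z)$, because $\partial F/\partial z=2cz=2pc'z\equiv0\pmod p$ has no inverse. The correct move is to lift the scalar square-root congruence $c'z^2\equiv n$ rather than $F$ itself, exploiting that $c'z^2\equiv n\pmod{p^{m-1}}$ already gives $cz^2\equiv np\pmod{p^m}$. In the second case no such difficulty arises, since the relevant variable $x$ has $\partial F/\partial x=2ax$ a unit. Assembling the two positive cases with the obstruction gives the stated dichotomy.
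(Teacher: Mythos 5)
Your proposal is correct and follows essentially the same route as the paper: the same case split on which of $\leg{-ab}{p}$, $\leg{nc/p}{p}$ equals $+1$, with a Hensel-type lifting (on the $x$-variable when $-ab$ is a residue, and of the unit congruence $c'z^2\equiv n$ rather than the full form when $nc/p$ is a residue), and the obstruction argument identical to the one in the proof of part 1 of Theorem \ref{main}. The subtlety you flag—that one must lift $c'z^2\equiv n$ modulo $p^{m-1}$ instead of applying Hensel to $F$ at $(0,0,z)$—is precisely how the paper's induction (with $z'=z+k_1p^{m-1}$) is arranged.
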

		\begin{proof}
			Note that if $\leg{-ab}{p}=1$ then $ax^2+by^2\equiv np\equiv 0\mod p$ has a solution that not both $x$ and $y$ are divisible by $p$ and the above proof works again. If $\leg{nc/p}{p}=1$, let $c=pc'$, then we can find $z$ such that $c'z^2\equiv n\mod p$ or $cz^2\equiv np\mod p^2$. 
			Suppose that $cz^2=np+kp^m$ for some $m\ge 2$ and $k$. Then since $n$ is prime to $p$, $z$ is also prime to $p$. Let $z'=z+k_1p^{m-1}$, then
			$$c(z')^2=np+kp^m+2c'zk_1p^{m}+k_1^2c'p^{2m-1}.$$
			Since $2m-1\ge m+1$, it is enough to take $k_1\equiv -(2c'z)^{-1}k\mod p$ for the induction to work. So, even $cz^2$ alone will represent $np$ modulo $p^m$. The second part of the lemma was done when we proved Theorem 1, part 1.
		\end{proof}
		\begin{lem}
			If $a$, $b$ and $c$ are odd integers and an integer $n$ is represented by $ax^2+by^2+cz^2\mod 8$ then it is represented $\mod 2^m$ for any $m\ge 3$.
		\end{lem}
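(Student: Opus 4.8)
The plan is to lift a solution modulo $8$ to a solution modulo $2^m$ for every $m\ge 3$ by induction on $m$, the inductive step being a $2$-adic Hensel-type adjustment. Suppose $ax^2+by^2+cz^2\equiv n\pmod{2^m}$ for some $m\ge 3$; I want to produce a solution modulo $2^{m+1}$. The engine is the following substitution: if one of the coordinates, say $x$, is odd, replace it by $x+2^{m-1}t$. Expanding,
$$a(x+2^{m-1}t)^2=ax^2+ax\,t\,2^{m}+a\,t^2\,2^{2m-2},$$
and since $m\ge 3$ gives $2m-2\ge m+1$, the last term vanishes modulo $2^{m+1}$, so the value changes only by $ax\,t\,2^{m}\pmod{2^{m+1}}$. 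As $a$ and $x$ are odd, $ax$ is odd, so taking $t\in\{0,1\}$ lets me add either $0$ or $2^m$ and hence correct the top bit to match $n$ modulo $2^{m+1}$. Because $2^{m-1}$ is even, this substitution does not change the parity of $x$, so the new solution still has an odd coordinate and the induction can be repeated indefinitely.

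It remains to guarantee an odd coordinate at the start. First I would dispose of the two easy cases. Since $a,b,c$ are odd, $ax^2+by^2+cz^2$ is odd exactly when an odd number of $x,y,z$ are odd; thus if $n$ is odd, any representation modulo $8$ already has an odd coordinate. If $n\equiv 2\pmod 4$, then a representation cannot have zero odd coordinates (that would force the value $\equiv 0\pmod 4$) nor an odd number of them (that would force an odd value), so exactly two coordinates are odd and again an odd coordinate is present. In either case the substitution above runs from $m=3$ upward and yields a representation modulo $2^m$ for all $m\ge 3$.

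The hard part will be the case $4\mid n$, where a representation modulo $8$ may be forced to have all three coordinates even (for instance $x^2+y^2+z^2$ with $n\equiv 0\pmod 8$), so that the cross term in the lift vanishes and the derivative argument stalls. Here I would switch to a descent: writing $x=2X,\ y=2Y,\ z=2Z$ turns the congruence into $aX^2+bY^2+cZ^2\equiv n/4\pmod{2^{m-2}}$, reducing the problem for $n$ modulo $2^m$ to the same problem for $n/4$ modulo $2^{m-2}$. Iterating strips the factors of $4$ off $n$ while lowering the modulus in step, so the real content is to check that the descent terminates at a value that is itself representable modulo $8$ and then to re-lift it; this bookkeeping, rather than the lifting mechanism, is the delicate point. (For the square-free $n$ that is all one needs when passing to representability over $\mathbb{Q}$ one has $v_2(n)\le 1$, so this even case never arises and the two easy cases already finish the argument.)
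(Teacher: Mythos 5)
Your treatment of the case $4\nmid n$ is exactly the paper's proof: the same substitution $x\mapsto x+2^{m-1}t$ at an odd coordinate, the same inequality $2m-2\ge m+1$ for $m\ge 3$ that kills the square term, and the same parity choice of $t$ (the paper takes $k_1=k$ when the current value is $n+k2^m$); the paper gets its odd coordinate simply by noting that $4\nmid n$ forces one, which is your two easy cases combined. The real divergence is the case $4\mid n$. The paper dismisses it in one line --- ``we can replace $x$, $y$ and $z$ by $2^kx$, $2^ky$ and $2^kz$'' --- which is your descent read backwards: write $n=4^kn_0$ with $4\nmid n_0$, prove the lemma for $n_0$, and scale up. Both arguments need the same missing ingredient, namely that the $4$-free part $n_0$ is itself represented modulo $8$, and you were right to flag exactly this as the delicate point.

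In fact the point is not delicate but fatal: the lemma as stated is false when $4\mid n$, so neither your descent nor the paper's reduction can be repaired. Take $a=b=c=1$ and $n=28$. Then $n\equiv 4\pmod 8$ is represented modulo $8$ (as $2^2+0^2+0^2$), but not modulo $32$: odd squares are $\equiv 1\pmod 8$ and even squares are $\equiv 0$ or $4\pmod 8$, so a sum of three squares with three, two, or one odd terms lies in $\{3\}$, $\{2,6\}$, or $\{1,5\}$ modulo $8$; hence any representation of $28$ modulo $32$ would have $x,y,z$ all even, and writing $x=2X$, $y=2Y$, $z=2Z$ it would force $X^2+Y^2+Z^2\equiv 7\pmod 8$, which is impossible. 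What your argument (and the paper's) actually proves is the lemma under the extra hypothesis $4\nmid n$, and your closing parenthetical is the correct resolution: in the only place the lemma is used (the proof of Theorem 8, via Hasse--Minkowski) it is applied to square-free $n$, where $v_2(n)\le 1$, so your two easy cases cover everything that is needed. In short, your proposal is sound exactly where the statement is true, and the ``delicate point'' you isolated is a genuine defect of the paper's lemma and its one-line reduction, not a gap in your argument.
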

		\begin{proof}
			It is enough to show this for the case where $n$ is not divisible by $4$, since we can replace $x$, $y$ and $z$ by $2^kx$, $2^ky$ and $2^kz$ for some $k$. We prove the claim by induction on $m$. Suppose that we can find $x.y,z$ such that $ax^2+by^2+cz^2=n+k2^m$ for $m\ge 3$. Since $n$ is not a multiple of $4$ hence at least one of $x,y$ or $z$ (say $x$) is odd. Define $x'=x+k_12^{m-1}$ then 
			$$a(x')^2+by^2+cz^2=n+k2^m+2^mak_1x+ ak_1^22^{2m-2}$$
			since $2m-2\ge m+1$, we only need to take $k_1=k$ to make the induction work.
		\end{proof}
		\begin{lem}
			If one of $a$, $b$ or $c$ is even and and an integer $n$ is represented by $ax^2+by^2+cz^2\mod 16$ then it is represented $\mod 2^m$ for any $m\ge 4$.
		\end{lem}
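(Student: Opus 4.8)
The plan is to imitate the proof of the preceding lemma (the case of three odd coefficients), with the modulus $8$ upgraded to $16$, and to argue by induction on $m$, the base case $m=4$ being exactly the hypothesis. Write $a=2a'$ with $a'$ odd, so that $b$ and $c$ are odd. Exactly as in the odd case, I would first reduce to the situation $4\nmid n$: if $4\mid n$, factor out the largest power $4^{j}$ dividing $n$ and replace $x,y,z$ by $2^{j}x,2^{j}y,2^{j}z$, so that it suffices to lift a representation of the part $n_{0}=n/4^{j}$, which is no longer divisible by $4$. The payoff of this reduction is that whenever $4\nmid n$, any representation $ax^{2}+by^{2}+cz^{2}\equiv n\pmod{2^{m}}$ with $m\ge 2$ must have at least one of $x,y,z$ odd, since otherwise the left-hand side would be divisible by $4$.

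The heart of the argument is the Hensel-type lifting step from $2^{m}$ to $2^{m+1}$. Suppose $ax^{2}+by^{2}+cz^{2}=n+k2^{m}$. If the chosen odd variable carries an odd coefficient, say $y$ is odd, I set $y'=y+k2^{m-1}$, which gives
$$ax^{2}+b(y')^{2}+cz^{2}=n+k2^{m}+byk\,2^{m}+bk^{2}2^{2m-2}.$$
Since $by$ is odd, the term $k2^{m}(1+by)$ is divisible by $2^{m+1}$, and $2m-2\ge m+1$ already for $m\ge 3$, so the new value is $\equiv n\pmod{2^{m+1}}$. If instead the only available odd variable is $x$, the one carrying the even coefficient $2a'$, the naive shift $x'=x+k2^{m-1}$ fails, because $2ax=4a'x$ makes its contribution a multiple of $2^{m+1}$; this is precisely why modulus $8$ does not suffice here. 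The remedy is the smaller shift $x'=x+k2^{m-2}$, which gives
$$a(x')^{2}+by^{2}+cz^{2}=n+k2^{m}+a'xk\,2^{m}+a'k^{2}2^{2m-3},$$
and now $a'x$ is odd, while $2m-3\ge m+1$ holds \emph{exactly} when $m\ge 4$. Thus the even coefficient is what forces the threshold $m\ge 4$ and the base modulus $16$.

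Each shift is by a multiple of $2^{m-2}\ge 4$, hence even, so the parities of $x,y,z$ are unchanged and an odd variable remains available at every stage; the induction therefore runs for all $m\ge 4$. The main obstacle, and the only genuinely new feature compared with the odd case, is the possibility that the sole odd variable is the one bearing the even coefficient: dealing with it requires the refined shift $2^{m-2}$ and is exactly what pins the conclusion to $m\ge 4$ rather than $m\ge 3$. The remaining point I would check carefully is the legitimacy of the reduction to $4\nmid n$, namely that the cofactor $n_{0}$ obtained after dividing out $4^{j}$ is still represented modulo $16$, so that the base case of the induction is genuinely in hand; this can be read off from the explicit description of the residues represented modulo $16$ established in the structural lemma proved above.
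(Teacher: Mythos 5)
Your induction step is exactly the paper's: pick an odd variable, shift it by $k2^{m-1}$ when its coefficient is odd, and by the refined amount $k2^{m-2}$ when it carries the even coefficient $2a'$; the cross term is then $a'xk2^{m}$ with $a'x$ odd, and the error term $a'k^{2}2^{2m-3}$ requires $2m-3\ge m+1$, i.e. $m\ge 4$. So the heart of your argument, including the diagnosis that the even coefficient is what forces the base modulus $16$, coincides with the paper's proof.

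Where you go beyond the paper is the final paragraph, and there your proposed repair does not work. You reduce to $4\nmid n$ (as the paper also does, with a one-line justification) and then claim that the needed fact --- that the cofactor $n_{0}=n/4^{j}$ is still represented modulo $16$ --- ``can be read off'' from the structural lemma. In fact that lemma shows the opposite in the exceptional case $b+c\equiv a$ or $2a\pmod 8$: the unique missing class $-abc\pmod{16}$ satisfies $-abc\equiv 2\pmod 4$, so for $n_{0}\equiv -abc\pmod{16}$ the number $n=4n_{0}\equiv 8\pmod{16}$ lies in a represented class while $n_{0}$ does not. Concretely, for $10x^{2}+y^{2}+z^{2}$ (so $-abc\equiv 6\pmod{16}$) the number $24\equiv 8\pmod{16}$ is represented modulo $16$ (take $y=z=2$), but any solution of $10x^{2}+y^{2}+z^{2}\equiv 24\pmod{64}$ forces $x,y,z$ all even (check residues modulo $8$) and hence yields a representation of $6\pmod{16}$, which does not exist; thus $24$ is represented modulo $16$ but not modulo $64$. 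So the reduction to $4\nmid n$ is not legitimate, and the lemma is in fact false for $n$ divisible by $4$ --- a defect your write-up shares with the paper's own proof, which asserts the same reduction without justification. Both proofs (yours included) are correct and complete once the hypothesis $4\nmid n$ is added to the statement, which is harmless since the lemma is only ever applied to square-free $n$ in Theorem 8.
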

		\begin{proof}
			As the previous part we may assume $n$ is not divisible by $4$. Assume $ax^2+by^2+cz^2=n+k2^m$ for some $m\ge 4$. Since $n$ is not a multiple of $4$ one of $x$, $y$ or $z$ is odd. Assume $x$ is odd. If $a$ is odd as well the same proof as before makes the induction work. Assume $a=2a'$ where $a'$ is an odd integer. Let $x'=x+k_12^{m-2}$ then 
			$$a(x')^2+by^2+cz^2=n+k2^m+2^ma'k_1x+a'k_1^22^{2m-3}.$$
			Since $2m-3\ge m+1$, we only need to take $k_1=k$ to make the induction work.
		\end{proof}
		\begin{thm}
			Let $N$ be a non-zero rational number and let $N=n m^2$ where $n$ is a square-free integer and $m$ is a rational number. Then $N$ is represented by $ax^2+by^2+cz^2$ over the rational numbers if and only if the following conditions hold.
			\begin{enumerate}
				\item If $a$, $b$ and $c$ are positive (resp. negative) then $N$ is positive (resp. negative).
				\item If $a$, $b$ and $c$ are odd and $a\equiv b\equiv c\mod 4$ then $n\not\equiv -abc\mod 8$.
				\item If one of $a$, $b$ or $c$ (say $a$) is even and $b+c\equiv a$ or $2a\mod 8$ then $n\not\equiv -abc\mod 16$.
				\item For all odd primes $p|n$ and $p|a$ either $\leg{-bc}{p}=1$ or $\leg{na/p^2}{p}=1$. Similar results should hold symmetrically for $b$ and $c$.
			\end{enumerate}
		\end{thm}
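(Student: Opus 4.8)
The plan is to reduce the problem to representing the square-free integer $n$, invoke Hasse--Minkowski, and then check local solvability one place at a time, matching each local obstruction to one of the four conditions. Since $N=nm^2$ with $m\in\mathbb{Q}^*$, scaling the variables by $m$ shows that $ax^2+by^2+cz^2$ represents $N$ over $\mathbb{Q}$ if and only if it represents $n$; so it suffices to treat the square-free integer $n$. By Hasse--Minkowski in the form quoted above, $n$ is represented over $\mathbb{Q}$ exactly when it is represented over $\mathbb{R}$ and modulo $p^m$ for every prime $p$ and every $m\ge 1$, and the latter is precisely what the preceding lemmas control. I would then walk through the places $\infty$, $2$, and the odd primes separately.

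At the real place, a form with all positive (resp.\ all negative) coefficients takes only nonnegative (resp.\ nonpositive) values, while a mixed-sign form is indefinite and represents every real number; since $N$ and $n$ share a sign, this is exactly condition (1), with no constraint in the indefinite case. At $p=2$, if $a,b,c$ are odd then the lemma reducing representability $\bmod\,2^m$ to representability $\bmod\,8$, together with part (1) of the lemma proving Theorem~\ref{main}(3), shows the only missed class is $-abc\bmod 8$ and only when $a\equiv b\equiv c\bmod 4$; this is condition (2). If one coefficient is even, the analogous lemma reduces to $\bmod\,16$, and part (2) of that lemma gives the missed class $-abc\bmod 16$ exactly when $b+c\equiv a$ or $2a\bmod 8$; this is condition (3).

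For an odd prime $p$, if $p\nmid abc$ then the first lemma of this section (its ``$p\nmid c$'' branch) represents every residue $\bmod\,p^m$, so such primes impose nothing. If $p$ divides exactly one coefficient, say $c$ (the other cases being symmetric), and $p\nmid n$, then $p\nmid ab$ and that same lemma again guarantees representability; hence a genuine constraint can only occur when $p\mid\gcd(n,c)$. Writing $n=pn'$ with $p\nmid n'$, the second lemma of this section says $n=n'p$ is represented $\bmod\,p^m$ unless $\leg{-ab}{p}=\leg{n'c/p}{p}=-1$; since $n'c/p=nc/p^2$, the solvable alternative ``$\leg{-ab}{p}=1$ or $\leg{nc/p^2}{p}=1$'' is precisely condition (4) for $p\mid c$, and the $p\mid a$, $p\mid b$ cases follow by relabelling. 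Assembling the four places via Hasse--Minkowski then yields the stated equivalence.

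The step needing the most care is the passage between ``represented $\bmod\,p^m$ for all $m$'' and genuine representability over $\mathbb{Q}_p$: because variables may be scaled by powers of $p$, only the square class of $n$ in $\mathbb{Q}_p^*$ matters, which is exactly why normalizing $n$ to be square-free, so that $v_p(n)\in\{0,1\}$, lets the two odd-prime lemmas (the cases $p\nmid n$ and $p\,\|\,n$) exhaust all possibilities. I expect the remaining hazard to be purely bookkeeping: keeping the $a,b,c$ symmetry straight and verifying that the odd-prime $\gcd$ conditions, the $\bmod\,8$/$\bmod\,16$ analysis at $2$, and the sign condition at $\infty$ together account for every local obstruction with neither overlap nor omission.
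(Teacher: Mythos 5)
Your proposal is correct and follows essentially the same route as the paper: reduce $N$ to the square-free integer $n$ by scaling the variables by $m$, invoke Hasse--Minkowski, get necessity from Theorem \ref{main}, and get sufficiency place by place from the section's lemmas (mod $8$/$16$ plus the lifting lemmas at $2$, the two odd-prime lemmas split according to whether $p$ divides $\gcd(n,abc)$, and the sign condition at the real place). Your write-up is in fact more explicit than the paper's own proof about which prime/coefficient configurations each lemma covers and about why square-freeness of $n$ makes the two odd-prime cases exhaustive.
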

		\begin{proof} Note that $N$ is represented over the rationals by $ax^2+by^2+cz^2$ if an only if $n$ is represented by the rationals. It is enough to replace $x,y$ and $z$ with $mx,my$ and $mz$. Now according to Theorem 1 the conditions above are necessary. Also, Lemmas 3, 6 and 7 imply that $n$ is represented by $ax^2+by^2+cz^2\mod 2^m$ for any $m$. And for odd prime $p$, Lemmas 4 and 5 show that it is also represented by $ax^2+by^2+cz^2\mod p^m$. Condition 1 in the theorem, implies that $n$ is represented over the real numbers and now Hasse-Minkowski's theorem finishes the proof.
			\end{proof}
		\begin{ex} For instance, consider the Ramanujan ternary form $x^2+y^2+10z^2$. Since $\leg{-1}{5}=1$ and $a+b\equiv c\mod 8$, i.e. $2\equiv 10\mod 8$, the only positive numbers up to a square factor not represented over the rational numbers are those $\equiv -10\mod 16$. That is numbers of the form $16k+6$. Also, since for an odd integer $x$, $x^2\equiv 1$ or $9\mod 16$, hence the only positive integers not represented over the rationals are $4^m(16k+6)$. This is different from representing numbers over integers. It was mentioned in the introduction that without assuming the generalized Riemann hypothesis, we do not know all the odd numbers not represented by this form over the integers. All odd positive integers are represented over the rational numbers by this form. For example $3=(\frac{1}{2})^2+(\frac{1}{2})^2+10(\frac{1}{2})^2$, although $3$ is not represented over the integers.
		\end{ex}
		
		An interesting consequence of the theorem above is the following result that is in some sense converse to the corollary 1. 
		\begin{thm} If $a, b$ and $c$ are non-zero pairwise relatively prime integers that are square-free and are not of the same sign then $ax^2+by^2+cz^2$ represents all integers over the integers if and only if the following congruence is solvable.
		$$ax^2+by^2+cz^2\equiv -abc\mod (abc)^2.$$
		\end{thm}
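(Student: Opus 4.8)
The plan is to derive both implications from the apparatus built in Sections~2 and~3, the forward direction being essentially free and the reverse requiring us to translate solvability of a single congruence into the full list of hypotheses of the representation theorem of Section~3. The ``only if'' implication is immediate: if the form represents every integer over $\mathbb{Z}$ it represents $-abc$, and reducing any identity $ax^2+by^2+cz^2=-abc$ modulo $(abc)^2$ exhibits a solution of the congruence.

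For the converse, suppose the congruence is solvable. Because $a,b,c$ are pairwise coprime and square-free, $(abc)^2=\prod_{p\mid abc}p^2$, and by the Chinese Remainder Theorem solvability is equivalent to solvability modulo $p^2$ for each odd $p\mid abc$ together with solvability modulo $4$ when one coefficient is even. The modulo-$4$ part is automatic and carries no information: if $a=2a'$ then $-abc\equiv 2\pmod 4$ and the term $ax^2$ alone realises this residue, so all the content sits at the odd primes. First I would extract the Legendre data there: for an odd $p\mid a$, writing $a=pa'$ and $-abc=p\cdot(-a'bc)$ with $-a'bc$ prime to $p$, the lemma governing representation modulo $p^2$ at an odd prime dividing a coefficient applies with ``$n$'' equal to $-a'bc$. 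Its exceptional condition is $\leg{-bc}{p}=\leg{na/p}{p}=-1$, and here the two symbols coincide, since $na/p=-(a')^2bc$ and $(a')^2$ is a nonzero square modulo $p$; hence solvability modulo $p^2$ is exactly $\leg{-bc}{p}=1$, and symmetrically for $b$ and $c$. Thus the congruence forces $\leg{-bc}{p}=1$ for every odd $p\mid a$, and the analogous statements for $b$ and $c$.

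The next step is to feed these Legendre conditions into the representation theorem and conclude that every integer is represented over $\mathbb{Q}$. Its first condition is vacuous, because the form is indefinite and so represents both signs over $\mathbb{R}$; its fourth condition holds precisely because we have just verified $\leg{-bc}{p}=1$ for all relevant $p$. The subtle point is its second and third conditions, which a priori could still obstruct representation of numbers congruent to $-abc$ modulo $8$ or $16$, since the modulus $(abc)^2$ never sees the prime $2$ in these cases. This is exactly where the hypothesis that $a,b,c$ are \emph{not} of the same sign becomes decisive. I would re-run the quadratic-reciprocity computation of the proof of part~2 of Theorem~\ref{main}, now retaining the sign characters $\leg{-1}{|a|},\leg{-1}{|b|},\leg{-1}{|c|}$ that were trivial when the coefficients were positive. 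The effect of these characters is to make the product of the three expressions $\leg{-bc}{a}\leg{-ac}{b}\leg{-ab}{c}$ equal to $-1$ precisely in the exceptional residue configurations ($a\equiv b\equiv c\pmod 4$ in the odd case, and $b+c\equiv a$ or $2a\pmod 8$ in the even case); since we are assuming all three Legendre conditions equal $+1$, those configurations are ruled out. Hence conditions~2 and~3 are automatically satisfied, every integer is represented over $\mathbb{Q}$, and by the lemmas of Section~3 every integer is also represented modulo every prime power.

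Finally, I would upgrade representability over $\mathbb{Q}$ and over every $\mathbb{Z}_p$ to representability over $\mathbb{Z}$. A ternary form that represents every rational number is isotropic, so $ax^2+by^2+cz^2$ is an isotropic indefinite ternary form; for such forms strong approximation for the spin group (Eichler--Kneser) guarantees that the integral Hasse principle holds with no exceptional square classes, so local representability of every integer promotes to integral representability of every integer. The main obstacles I foresee are twofold: the sign bookkeeping in the reciprocity product, which is routine but must be checked across every exceptional residue pattern and every admissible sign pattern, and this final local-to-integral passage, which is the one genuinely non-elementary ingredient and lies outside the quadratic-reciprocity toolkit used in the rest of the paper.
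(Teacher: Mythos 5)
Your forward direction and the opening of your converse coincide with the paper's own argument: reducing the congruence modulo $p^{2}$ for each odd $p\mid abc$ and applying Lemma 5 (together with the observation that $\leg{na/p}{p}=\leg{-bc}{p}$ because $na/p=-(a')^{2}bc$) to extract $\leg{-bc}{p}=1$ for every odd $p\mid a$, and symmetrically for $b$ and $c$, is exactly how the paper begins. After that the routes diverge. The paper feeds these Legendre conditions, together with the mixed-sign hypothesis, into Legendre's theorem of 1785 to obtain a nontrivial integral zero $ax_{0}^{2}+by_{0}^{2}+cz_{0}^{2}=0$ with $\gcd(x_{0},y_{0},z_{0})=1$; it then chooses $x_{1},y_{1},z_{1}$ with $ax_{0}x_{1}+by_{0}y_{1}+cz_{0}z_{1}=1$ (or $=2$), and the identity $a(kx_{0}+x_{1})^{2}+b(ky_{0}+y_{1})^{2}+c(kz_{0}+z_{1})^{2}=2k+(ax_{1}^{2}+by_{1}^{2}+cz_{1}^{2})$, plus elementary parity adjustments of $(x_{1},y_{1},z_{1})$, represents every integer over $\mathbb{Z}$ directly. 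In particular the paper never needs your reciprocity-with-signs computation (which is a correct claim, but one you only sketch and do not carry out): once isotropy is in hand, the $2$-adic conditions of Theorem 8 are automatically unobstructed, and, more importantly, the question of passing from rational or local representability back to integral representability never arises.

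That passage is precisely where your proposal has a genuine gap. The theorem asserts representation over the integers, and your route ends with: every integer is represented over $\mathbb{Q}$ and over every $\mathbb{Z}_{p}$, hence, by ``strong approximation (Eichler--Kneser) \dots the integral Hasse principle holds with no exceptional square classes,'' over $\mathbb{Z}$. For ternary forms this last implication is exactly the delicate point. Representation by the genus does not in general imply representation by the form, even for indefinite ternary forms: this is the phenomenon of spinor exceptional integers. Strong approximation for the spin group yields that each spinor genus of an indefinite form of rank at least $3$ contains a single class; it says nothing by itself about the discrepancy between the genus and the spinor genus, which is where the exceptions live. The statement you actually need --- that a ternary form isotropic over $\mathbb{Q}$ admits no spinor exceptional integers --- is true, but it is itself a theorem requiring computation of local relative spinor norm groups, and it is not a formal consequence of strong approximation alone; as written, the one step that lands on integral rather than rational representation is an unproved black box, and the single non-elementary ingredient in an otherwise elementary problem. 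The irony is that a one-line elementary repair sits inside your own argument: isotropy over $\mathbb{Q}$ yields, after clearing denominators, an integral isotropic vector, and the paper's explicit construction above then converts that vector into integral representations of all integers with no local-global machinery whatsoever.
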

		\begin{proof}
		Suppose that the congruence $ax^2+by^2+cz^2\equiv -abc\mod (abc)^2$ is solvable.
		If for a prime $p|c$, we have $\leg{-ab}{p}=-1$ then according to lemma 5, $-abc$ is not represented by $ax^2+by^2+cz^2\mod p^2$, because $\leg{(-abc/p)(c/p)}{p}=\leg{-ab}{p}=-1$, which contradicts our assumption. This implies that for any prime $p|c$, we have $\leg{-ab}{p}=1$. By the Chinese remainder theorem, this implies that $-ab$ is a quadratic residue modulo $c$. Similarly $-bc$ and $-ac$ are quadratic residues modulo respectively $a$ and $b$. Now, we will se a well-known theorem proved by Legendre in 1785 (see \cite{L}, pp 509-513) that states, with these conditions we have a solution 
		$$ax_0^2+by_0^2+cz_0^2=0$$
		in integers where not all three numbers $x_0,y_0$ and $z_0$ are zero. We may assume that the greatest common divisor of $x_0,y_0,z_0$ is one. This implies that these numbers are pairwise relatively prime, since we have assumed that the coefficients $a,b$ and $c$ are square free and if a prime factor divides $x_0$ and $y_0$ (for example) then it must divide $z_0$ as well. From this fact, it follows that the greatest common divisor of $ax_0$, $by_0$ and $cz_0$ is one. Therefore there are integers $x_1$, $y_1$ and $z_1$ such that 
		$$ax_0x_1+by_0y_1+cz_0z_1=1.$$
		Now it follows that
		\begin{equation}
		a(kx_0+x_1)^2+b(ky_0+y_1)^2+c(kz_0+z_1)^2=2k+(ax_1^2+by_1^2+cz_1^2).
		\end{equation}
		Assume first that $a,b$ and $c$ are odd. Then at least one of $x_0,y_0$ or $z_0$ (say $x_0$) is even the other two are odd. Note that $x_2=x_1+by_0$, $y_2=y_1-ax_0$ and $z_2=z_1$ satisfies
		$$ax_0x_2+by_0y_2+cz_0z_2=1$$
		Hence
			\begin{equation}
		a(kx_0+x_2)^2+b(ky_0+y_2)^2+c(kz_0+z_2)^2=2k+(ax_2^2+by_2^2+cz_2^2).
		\end{equation}
		Since $ax_1^2+by_1^2+cz_1^2$ and $ax_2^2+by_2^2+cz_2^2$ have different parities, the equations (1) and (2) above will represent all integers.
	Now assume that $a$ is even and $b$ and $c$ are odd. Then since $y_0$ and $z_0$ are coprime, they need to be odd. Hence $y_1$ and $z_1$ must be of different parities and this implies that $ax_1^2+by_1^2+cz_1^2$ is odd. So the equation (1) represents all odd integers.
	To represent all even integers, it is enough to represent numbers of the form $4k+2$. Since other numbers are of the form $4^m n$ where $n$ is either odd or $\equiv 2\mod 4$, which has been represented.  Now choose $x_1,y_1,z_1$ such that 
	$$ax_0x_1+by_0y_1+cz_0z_1=2$$
	and hence
	\begin{equation}
		a(kx_0+x_1)^2+b(ky_0+y_1)^2+c(kz_0+z_1)^2=4k+(ax_1^2+by_1^2+cz_1^2).
		\end{equation}	
		As before, we may replace $(x_1,y_1,z_1)$ with $(x_1+by_0,y_1-ax_0,z_1)$ if needed to assume $x_1$ is odd. Since $y_1$ and $z_1$ are of the same parity, if we replace $(x_1,y_1,z_1)$ with $(x_1, y_1+cz_0, z_1-by_0)$ we may assume that $y_1$ and $z_1$ are even. Hence
		$$ax_1^2+by_1^2+cz_1^2\equiv a\equiv 2\mod 4$$
		and equation (3) will represent all numbers $\equiv 2\mod 4$.
				\end{proof}
		
		\begin{rem} Inspecting the proof given above, one sees that the ternary quadratic form $ax^2+by^2+cz^2$ with $abc$ square-free represents all integers over the integers if and only if it represents zero non-trivially. For example $x^2+y^2-cz^2$ represents all integers if and only if $n$ is a sum of two coprime integers. 
		\end{rem}
		\section{Final Remarks}
		In this section, we remove extra assumptions that we have imposed on the coefficients $a$, $b$ and $c$ of the form $f=ax^2+by^2+cz^2$. To ease our exposition, denote $R(a,b,c)$ to be the set of rational numbers represented by $f$ over the rational numbers. Assume $a$, $b$ and $c$ be non-zero rational numbers. Let $a=r^2a_1, b=s^2b_1$ and $c=t^2c_1$, where $a_1,b_1$ and $c_1$ are square-free integers and $r$, $s$ and $t$ are rational numbers. Then by a change of variables $x$, $y$ and $z$ to $rx$, $sy$ and $tz$ it is clear that
		$R(a,b,c)=R(a_1,b_1,c_1).$
		Next let $d$ be the greatest common divisor of $a_1,b_1$ and $c_1$ and write $a_1=da_2$, $b_1=db_2$ and $c_1=dc_2$. We have
		$R(a_1,b_1,c_1)=d\cdot R(a_2,b_2,c_2).$
		To proceed, write $a_2=d_1d_2 a_3$, $b_2=d_2d_3b_3$ and $c_2=d_1d_3c_3$, where $d_1$, $d_2$ and $d_3$ are the greatest common divisor of respectively, $a_2$ and $b_2$; $a_2$ and $c_2$; and finally $b_2$ and $c_2$. We can deduce by factoring $d_1d_2d_3$ and then multiplying each coefficient with an appropriate square that
		$$R(a_2,b_2,c_2)=d_1d_2d_3R(a_3/d_3, b_3/d_1, c_3/d_2)=d_1d_2d_3R(a_3d_3, b_3d_1, c_3d_2).$$
		Now if we let $a'=a_3d_3, b'=b_3d_1$ and $c'=c_3d_2$, these numbers are square-free and pairwise relatively prime. We have
		\begin{equation}R(a,b,c)=dd_1d_2d_3R(a',b',c')\end{equation}
		We then get the following corollary that is a generalization of Theorem 2 of \cite{DW}.
		\begin{cor}
			Let $a,b$ and $c$ are non-zero positive rational numbers and $abc=R^2S$ where $S$ is square-free. Then with the notation above, every number $\equiv -S\mod 8d_1d_2d_3S^2$ will not be represented over the rational numbers. In particular, if $a, b$ and $c$ are positive integers then any number $\equiv -S\mod 8abcS$ and more particularly any number $\equiv-abc\mod 8(abc)^2$ can not be represented over the rational numbers.
		\end{cor}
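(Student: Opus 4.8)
The plan is to reduce everything to the square-free, pairwise coprime, positive triple $(a',b',c')$ by means of the scaling identity (4), $R(a,b,c)=dd_1d_2d_3\,R(a',b',c')$, and then to invoke the representation criterion (Theorem 8), which describes $R(a',b',c')$ in terms of the square-free part alone. Thus a positive integer $N$ is unrepresented by $f$ exactly when $M:=N/(dd_1d_2d_3)$ is unrepresented by $a'x^2+b'y^2+c'z^2$, and the latter depends only on the square-free part of $M$. The heart of the argument will be the following criterion, which I would prove once and then apply to each of the three stated progressions: if the square-free part of $M$ equals $a'b'c'\cdot e$, where $e$ is a square-free integer coprime to $a'b'c'$ that is the square-free part of some integer $W\equiv-1\pmod{8a'b'c'}$, then $M\notin R(a',b',c')$.

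First I would set down the arithmetic bookkeeping. Writing $P:=a_3b_3c_3$, the three layers $d$, $d_1d_2d_3$ and $P$ are pairwise coprime and square-free, and one checks that $a'b'c'=d_1d_2d_3\,P$ while the square-free part of $abc$ is $S=dP$. Consequently the hypothesis $N\equiv-S\pmod{8d_1d_2d_3S^2}$ gives $S\mid N$ and $W:=N/S\equiv-1\pmod{8d_1d_2d_3S}$, hence $W\equiv-1\pmod{8a'b'c'}$ because $8a'b'c'=8d_1d_2d_3P$ divides $8d_1d_2d_3S=8d_1d_2d_3dP$. A direct substitution then gives $M=PW/(d_1d_2d_3)$, so the square-free part of $M$ is $a'b'c'\cdot e$ with $e$ the square-free part of $W$; this is precisely the situation of the criterion.

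To prove the criterion I would show that $n_M:=a'b'c'\,e$ violates one of the conditions of Theorem 8 (condition 1 is harmless, since $N>0$). If some odd prime $p\mid a'$ has $\leg{-b'c'}{p}=-1$ (or the symmetric statement for $b'$ or $c'$), I would verify that condition 4 fails: since $a'$ is square-free, $\leg{n_Ma'/p^2}{p}=\leg{b'c'}{p}\leg{e}{p}$, and combining $\leg{b'c'}{p}=-\leg{-1}{p}$ with $W\equiv-1\pmod p$, whence $\leg{e}{p}=\leg{-1}{p}$, yields $\leg{n_Ma'/p^2}{p}=-1$. Otherwise all the relevant symbols are $+1$, and part 2 of Theorem \ref{main}, using positivity of the coefficients, puts us in the congruence regime: either $a'\equiv b'\equiv c'\pmod4$, or one coefficient is even with $b'+c'\equiv a'$ or $2a'\pmod8$. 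Since $e\equiv-1\pmod8$, in the all-odd case $n_M\equiv-a'b'c'\pmod8$, contradicting condition 2; in the even case $n_M\equiv-a'b'c'\pmod{16}$, contradicting condition 3. The one point needing care is the even case, where I only control $e$ modulo $8$: writing $e=-1+8t$ gives $n_M=-a'b'c'+8t\,a'b'c'$, and this is $\equiv-a'b'c'\pmod{16}$ precisely because $a'b'c'$ is now even. Hence $M\notin R(a',b',c')$ and so $N\notin R(a,b,c)$.

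Finally, for integer $a,b,c$ one has $R=dd_1d_2d_3$, so $8abcS=d^2d_1d_2d_3\cdot8d_1d_2d_3S^2$ is a multiple of $8d_1d_2d_3S^2$ with the same residue $-S$; the progression $\equiv-S\pmod{8abcS}$ is therefore contained in the already-excluded one. For the progression $\equiv-abc\pmod{8(abc)^2}$ the residue is $-abc=-R^2S$ rather than $-S$, so I cannot argue by containment; instead I would rerun the square-free-part computation, noting that a number $m$ in this progression satisfies $m=R^2S\,U$ with $U:=m/(R^2S)=8R^2Sj-1\equiv-1\pmod{8a'b'c'}$, so that $m/(dd_1d_2d_3)=d^2\,a'b'c'\,U$ has square-free part $a'b'c'\cdot(\text{square-free part of }U)$ and the criterion applies verbatim. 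I expect the main obstacle to be purely organisational: keeping the three coprime layers $d$, $d_1d_2d_3$, $P$ straight so that the identities $S=dP$ and $a'b'c'=d_1d_2d_3P$ hold, and checking that the prescribed modulus is exactly strong enough to force $W\equiv-1\pmod{8a'b'c'}$, which is what both the Legendre-symbol computation and the mod-$8$/mod-$16$ computation consume.
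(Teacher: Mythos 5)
Your argument is correct in substance, but its core is genuinely different from the paper's. Both proofs start identically, reducing to the square-free, pairwise coprime triple $(a',b',c')$ via $R(a,b,c)=dd_1d_2d_3R(a',b',c')$. From there the paper is purely mechanical: it cites Corollary 1 for the primed form (numbers $\equiv -a'b'c' \bmod 8(a'b'c')^2$ are unrepresentable), multiplies that progression by $dd_1d_2d_3$, rewrites it using $S=da_3b_3c_3$ and $a'b'c'=d_1d_2d_3a_3b_3c_3$, divides out the square factor $(d_1d_2d_3)^2$ to reach the modulus $8dd_1d_2d_3(a_3b_3c_3)^2$, and weakens to the stated modulus $8d_1d_2d_3S^2$; the two ``in particular'' clauses follow by containment of progressions and by multiplying by the square $R^2$. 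You instead bypass Corollary 1 entirely and re-derive a strengthened form of it: your criterion (if the square-free part of $M$ is $a'b'c'e$ with $e$ the square-free part of some $W\equiv -1\pmod{8a'b'c'}$, then $M\notin R(a',b',c')$) is proved by checking that condition 2, 3 or 4 of Theorem 8 fails, which forces you to redo the Legendre-symbol computation and the mod $8$/mod $16$ case analysis, invoking Theorem 1 part 2 — essentially the content of the paper's proof of Corollary 1. Your route is longer, but it buys something real: it isolates the invariant that actually governs rational representability (the square-free part), so all three stated progressions fall out of a single criterion, and your criterion describes a larger set of excluded numbers than any one congruence class. The paper's route is shorter because it treats Corollary 1 as a black box and never needs to reopen Theorem 8.

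One slip to repair: for integers $a,b,c$ you assert $R=dd_1d_2d_3$, which is false unless $a,b,c$ are themselves square-free; in general $R=rst\,dd_1d_2d_3$ (e.g. $a=4$, $b=c=1$ gives $R=2$ but $d=d_1=d_2=d_3=1$). This does not break the argument, because everywhere you use the identity the omitted factor $(rst)^2$ is a perfect square: $8abcS$ is still a multiple of $8d_1d_2d_3S^2$, and $m/(dd_1d_2d_3)=(rst)^2d^2a'b'c'U$ still has square-free part equal to that of $a'b'c'U$. But the displayed identities $8abcS=d^2d_1d_2d_3\cdot 8d_1d_2d_3S^2$ and $m/(dd_1d_2d_3)=d^2a'b'c'U$ are wrong as written and should carry the extra square factor.
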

		\begin{proof}
			From corollary 1, we know $a'x^2+b'y^2+c'z^2$ fails to represent all numbers $\equiv -a'b'c'\mod 8(a'b'c')^2$, so the original form fails to represent all numbers congruent to $-dd_1d_2d_3a'b'c'\mod 8dd_1d_2d_3(a'b'c')^2$. Now notice that $S=da_3b_3c_3$, so we can rewrite this as all numbers $\equiv -(d_1d_2d_3)^2S\mod 8 d(d_1d_2d_3)^3 (a_3b_3c_3)^2$. We can divide by a square, here $(d_1d_2d_3)^2$, without changing the representability over the rationals, so we get all numbers $\equiv -S\mod 8dd_1d_2d_3(a_3b_3c_3)^2$ can not be represented over the rationals. The weaker statement is that all number $\equiv -S\mod d_1d_2d_3S^2$ can not be represented over the rationals. To show the more particular results notice that $d_1d_2d_3S$ divides $abc$. For the last one, multiply these numbers by $R^2$, which is a square, so they remain unrepresentable over the rationals. We remark that if $a'b'c'$ is an even number, we may reduce the factor of $8$ in the modulus to $4$. 
		\end{proof}
		
		Finally, if $Q(x,y,z)$ is a general quadratic form over the rational, a linear change of variables with rational coefficients will transform it to a diagonal quadratic form $ax^2+by^2+cz^2$ with $a,b$ and $c$ rational numbers, see \cite{K}. These two forms have the same values over the rationals and using the relation $R(a,b,c)=dd_1d_2d_3R(a',b',c')$ in equation 1, we can find exactly the rational numbers represented by $Q$, if we use Theorem 8. If $Q$ is positive definite, then $a$, $b$ and $c$ are positive and Corollary 9 implies the following result.
		\begin{thm}
			Any positive definite ternary quadratic form over the rational numbers fails to represent an infinite progression of positive integers over the rational numbers.
		\end{thm}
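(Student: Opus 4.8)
The plan is to recognize this statement as essentially a repackaging of Corollary~9 once the form has been put in diagonal shape, so the work is to reduce to the diagonal situation already handled and then read off an infinite arithmetic progression from the modulus appearing there.

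First I would invoke the standard fact (cited as \cite{K} in the previous remark) that any ternary quadratic form $Q(x,y,z)$ over $\mathbb{Q}$ can be transformed by a linear change of variables with rational coefficients into a diagonal form $ax^2+by^2+cz^2$ with $a,b,c\in\mathbb{Q}$, and that the two forms take precisely the same set of values over the rationals, so that $R(Q)=R(a,b,c)$. The positive definiteness of $Q$ is preserved under a nonsingular rational change of variables, and a positive definite diagonal form must have positive coefficients; concretely, evaluating $ax^2+by^2+cz^2$ at the standard basis vectors shows $a,b,c>0$. Thus $a$, $b$, and $c$ are positive rational numbers, which is exactly the hypothesis under which Corollary~9 operates.

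Next I would apply Corollary~9 directly. Writing $abc=R^2S$ with $S$ square-free and using the notation $d_1,d_2,d_3$ from the reduction leading to equation~(4), Corollary~9 asserts that every integer $\equiv -S \pmod{8d_1d_2d_3S^2}$ fails to be represented by $ax^2+by^2+cz^2$ over $\mathbb{Q}$, and hence fails to be represented by $Q$. Since $8d_1d_2d_3S^2$ is a fixed positive integer and $-S$ is a fixed residue, the integers $-S+k\,(8d_1d_2d_3S^2)$ with $k$ ranging over the positive integers form an infinite arithmetic progression of \emph{positive} integers, none of which is represented by $Q$ over $\mathbb{Q}$. This is exactly the asserted infinite progression and completes the proof.

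I expect essentially no genuine obstacle here, since the substantive content has already been front-loaded into Theorem~8 and Corollary~9; the only points requiring care are bookkeeping. Specifically, one must confirm that diagonalization preserves the set of rational values (so that unrepresentability transfers back to $Q$), that positive definiteness forces the positive-coefficient hypothesis needed for Corollary~9, and that the excluded residue class $-S \bmod 8d_1d_2d_3S^2$ genuinely meets the positive integers infinitely often, which is immediate because the modulus is a fixed positive integer. The main conceptual step is simply identifying that a single excluded congruence class already furnishes an entire infinite progression.
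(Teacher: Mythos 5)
Your proposal is correct and follows essentially the same route as the paper: diagonalize $Q$ by a rational change of variables (preserving the set of rational values and positive definiteness, hence positive coefficients), then apply Corollary 9 to obtain the excluded residue class $-S \bmod 8d_1d_2d_3S^2$, which yields the infinite progression of positive integers not represented. The paper's own argument is exactly this, stated in the paragraph preceding the theorem.
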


	\end{document}